\begin{document}

\newtheorem{lemma}{Lemma}[section]
\newtheorem{definition}[lemma]{Definition}
\newtheorem{proposition}[lemma]{Proposition}
\newtheorem{corollary}[lemma]{Corollary}
\newtheorem{theorem}[lemma]{Theorem}
\newtheorem{remark}[lemma]{Remark}
\newtheorem{example}[lemma]{Example}

\markboth{Mathieu Molitor}
{Remarks on the statistical origin of the geometrical formulation of quantum mechanics}

\title{Remarks on the statistical origin of the geometrical formulation of quantum mechanics}

\author{Mathieu Molitor\thanks{Present address: \it{Fakult\"{a}t f\"{u}r Mathematik, Ruhr-Universit\"{a}t Bochum, Germany}}\\
\it \small{Department of Mathematics, Keio University}\\
\it \small{3-14-1, Hiyoshi, Kohoku-ku, 223-8522, Yokohama, Japan}\\ 
\small{\it{e-mail:}}\,\,\url{pergame.mathieu@gmail.com}
}
\date{}
\maketitle

\begin{abstract}
	A quantum system can be entirely described by the K\"{a}hler structure of the projective 
	space $\mathbb{P}(\mathcal{H})$ associated to the Hilbert space $\mathcal{H}$ of possible states; 
	this is the so-called geometrical formulation of quantum mechanics. 
	
	In this paper, we give an explicit link between the geometrical formulation (of finite dimensional 
	quantum systems) and statistics through the natural geometry of the space $\mathcal{P}_{n}^{\times}$ 
	of non-vanishing probabilities $p\,:\,E_{n}\rightarrow \mathbb{R}_{+}^{*}$ defined on a finite set 
	$E_{n}:=\{x_{1},...,x_{n}\}\,.$ More precisely, we use the 
	Fisher metric $g_{F}$ and the exponential connection $\nabla^{(1)}$ (both being 
	natural statistical objects living on $\mathcal{P}_{n}^{\times}$) to construct, 
	via the Dombrowski splitting Theorem, a K\"{a}hler structure
	on $T\mathcal{P}_{n}^{\times}$ which has the property that it induces the natural 
	K\"{a}hler structure of a suitably chosen open dense subset of $\mathbb{P}(\mathbb{C}^{n})\,.$ 

	As a direct physical consequence, a significant part of the quantum mechanical formalism (in finite 
	dimension) is encoded in the triple $(\mathcal{P}_{n}^{\times},g_{F},\nabla^{(1)})\,.$
\end{abstract}


\section{Introduction}
	
	In quantum mechanics, it is well known that if $\psi_{1}\,,\psi_{2}$ are two collinear vectors 
	belonging to the Hilbert space $\mathcal{H}$ describing 
	the possible states of a quantum system, then they are physically equivalent. 
	This means that the true configuration space of quantum mechanics 
	is the projective space $\mathbb{P}(\mathcal{H})\,,$ and this leads to a formulation of quantum 
	mechanics uniquely based on the geometry of $\mathbb{P}(\mathcal{H})$ 
	(see \cite{Ashtekar} for a detailed discussion). In this 
	formulation, the dynamics is governed by the Fubini-Study symplectic form, 
	and the observables are no more 
	operators, rather functions on the projective space having the particularity to preserve,
	in some sense, the Fubini-Study metric; eigenstates are
	critical points of the observable functions, and the corresponding critical values 
	are the eigenvalues. In other words, quantum mechanics can be
	completely formulated in terms of the {K\"{a}hler structure} of the projective 
	space $\mathbb{P}(\mathcal{H})\,.$
	Subsequently, we shall refer to this formulation as the 
	\textit{geometrical formulation} of quantum mechanics.
	
	The geometrical formulation, although complete and very elegant, has a very disappointing feature : 
	it doesn't explain, nor even justify, why its probabilistic interpretation, based on 
	the Fubini-Study metric, is 
	consistent. Consistency is only something that one observes once the computations are done, and,
	somehow, this appears as ``magical". But of course, magic doesn't exist, and this is clearly 
	the sign that there is something 
	in the geometrical formulation that we still don't understand. This concerns the various 
	formulas where probabilities are expected of course, but more generally, it concerns 
	our understanding of the link between the K\"{a}hler structure of the projective 
	space $\mathbb{P}(\mathcal{H})\,,$ and statistics. 

	This link has, until now, attracted very little attention in the existing literature, 
	and its nature is still quite mysterious. The only mention (to our knowledge) of a 
	possible link between the K\"{a}hler structure of the projective space,
	and statistics, comes from \textit{information geometry}\footnote{Information geometry 
	is the branch of mathematics devoted to the study of statistics using differential 
	geometrical tools (see \cite{Amari-Nagaoka}).}, where it is argued that 
	the Fubini-Study metric (at least for $\mathbb{P}(\mathbb{C}^{n})$), is a kind 
	of ``quantum analogue" of the famous \textit{Fisher metric}, the latter being a metric living 
	on the space $\mathcal{P}_{n}^{\times}$ of non-vanishing probabilities 
	$p\,:\,E_{n}\rightarrow \mathbb{R}$ defined on a finite 
	set $E_{n}=\{x_{1},...,x_{n}\}\,.$ 
	Such ``link" is, however, only an analogy, and in particular, 
	it has no precise mathematical formulation. 
	
	Nevertheless, the idea that the K\"{a}hler structure 
	of $\mathbb{P}(\mathbb{C}^{n})$ may originate from the intrinsic geometry of a
	{statistical manifold}\footnote{A statistical manifold $S$ 
	is a manifold having the property that to every point $x\in S$ is associated (in an injective way) 
	a probability $p$
	on a fixed measurable space $\Omega\,.$}, is an appealing one, and
	the main observation of this paper is that 
	it is ``almost" true. More precisely, we show that the restriction of the K\"{a}hler structure of 
	$\mathbb{P}(\mathbb{C}^{n})$ to the open dense subset\footnote{In the definition of 
	$\mathbb{P}(\mathbb{C}^{n})^{\times}$ given above, we use homogeneous coordinates $[z_{1},...,z_{n}]:=
	\mathbb{C}\cdot z\subseteq \mathbb{C}^{n}\,,$ where $z=(z_{1},...,z_{n})\in \mathbb{C}^{n}-\{0\}\,.$}
	$\mathbb{P}(\mathbb{C}^{n})^{\times}:=\{[z_{1},...,z_{n}]\in 
	\mathbb{P}(\mathbb{C}^{n})\,\vert\,
	z_{k}\neq 0\,\,\forall\, k=1,...,n\}$ 
	can be completely recovered from the statistical manifold 
	$\mathcal{P}_{n}^{\times}\,,$ supplemented with the Fisher metric $g_{F}$ {and} from another 
	important statistical object living naturally on $\mathcal{P}_{n}^{\times}\,,$ namely the 
	\textit{exponential connection} $\nabla^{(1)}\,.$ 
	Together, these geometrical objects form a triple $(\mathcal{P}_{n}^{\times},g_{F},\nabla^{(1)})$ 
	which allows the construction of an almost 
	Hermitian structure $(T\mathcal{P}_{n}^{\times},G,J)$ on $T\mathcal{P}_{n}^{\times}\,;$ 
	the construction, which is due to Dombrowski (see \cite{Dombrowski}) is straightforward.
	For $u_{p}\in T_{p}\mathcal{P}_{n}^{\times}\,,$ the exponential connection 
	$\nabla^{(1)}$ gives a splitting of $T_{u_{p}}T\mathcal{P}_{n}^{\times}$ into 
	the direct sum of the spaces of horizontal and vertical tangent vectors 
	that one may identify with $T_{p}\mathcal{P}_{n}^{\times}\oplus 
	T_{p}\mathcal{P}_{n}^{\times}\,.$ With this splitting, the metric $G$ at the point $u_{p}$
	is simply the direct sum $(g_{F})_{p}\oplus (g_{F})_{p}\,,$ 
	while the almost complex structure $J$ is given, for 
	$v_{p},w_{p}\in T_{p}\mathcal{P}_{n}^{\times}\,,$ 
	by $J_{u_{p}}(v_{p},w_{p}):=(-w_{p},v_{p})\,.$  
	The resulting almost Hermitian structure will be referred to as the \textit{almost 
	splitting Hermitian structure} associated to the triple 
	$(\mathcal{P}_{n}^{\times},g_{F},\nabla^{(1)})\,.$ 
	
	Now, the link between the almost splitting Hermitian structure associated to the triple 
	$(\mathcal{P}_{n}^{\times},g_{F},\nabla^{(1)})$ and the K\"{a}hler structure of 
	$\mathbb{P}(\mathbb{C}^{n})^{\times}$ goes as follows. As we show, there exists 
	a covering map $\tau\,:\,T\mathcal{P}_{n}^{\times}\rightarrow 
	\mathbb{P}(\mathbb{C}^{n})^{\times}$ having the property that the ``pull back" of the K\"{a}hler
	structure of $\mathbb{P}(\mathbb{C}^{n})^{\times}$ is exactly 
	the above almost splitting Hermitian structure 
	on $T\mathcal{P}_{n}^{\times}$ (the latter being actually also a K\"{a}hler structure). 
	Hence, we have a precise description of the statistical origin of the 
	K\"{a}hler structure of $\mathbb{P}(\mathbb{C}^{n})^{\times}\,.$ 

	A direct physical consequence --at least theoretically-- is that a significant part of the 
	quantum mechanical formalism (in finite dimension) is encoded in the triple 
	$(\mathcal{P}_{n}^{\times},g_{F},\nabla^{(1)})$ 
	via its associated almost splitting Hermitian structure. 
	This physical consequence is, however, not discussed in this paper for which we 
	refer the reader to \cite{MolitorPreprint}. \\

	This paper is organized as follows. 
	In \S\ref{section The geometry of}, we recall the definition of the Fisher 
	metric $g_{F}$ and the exponential connection 
	$\nabla^{(1)}$ on the space of probabilities $\mathcal{P}_{n}^{\times}\,,$ and we define 
	the almost splitting Hermitian structure associated to 
	$(\mathcal{P}_{n}^{\times},g_{F},\nabla^{(1)})\,.$ In \S\ref{section statistical nature}, we 
	introduce a covering map $\tau\,:\,T\mathcal{P}_{n}^{\times}\rightarrow 
	\mathbb{P}(\mathbb{C}^{n})^{\times}$ and show that the ``pull back" of the K\"{a}hler 
	structure of $\mathbb{P}(\mathbb{C}^{n})^{\times}$ 
	via $\tau$ is exactly the almost splitting Hermitian structure on 
	$T\mathcal{P}_{n}^{\times}$ associated to 
	the triple $(\mathcal{P}_{n}^{\times},g_{F},\nabla^{(1)})$ 
	(Proposition \ref{proposition origine des stat}).
\section{Information geometry and the statistical model $\mathcal{P}_{n}^{\times}$}
	\label{section The geometry of}
	Our statistical model will be the space $\mathcal{P}_{n}^{\times}$ 
	of non-vanishing probability distributions
	$p$ on a discrete set $E_{n}:=\{x_{1},...,x_{n}\}\,:$
	\begin{eqnarray}
		\mathcal{P}_{n}^{\times}:=\Big\{p\,:\,E_{n}\rightarrow \mathbb{R}\,\,\,
		\Big\vert\,\,\,p(x_{i})>0\,\,\,\text{for all}\,\,\,
		x_{i}\in E_{n}\,\,\,\text{and}\,\,\,\sum_{i=1}^{n}p(x_{i})=1\Big\}\,.
	\end{eqnarray}
	This space is clearly a connected manifold of dimension $n-1\,.$ From a 
	topological point of view, it is trivial since it can be realized as the interior of 
	a simplex, but from a statistical point of view, it is naturally endowed 
	with nontrivial geometric structures that we now want to describe (see \cite{Amari-Nagaoka}).\\

	First of all, and for the rest of this paper, we will always use the so-called 
	exponential representation for the tangent space :
	\begin{eqnarray}
		T_{p}\mathcal{P}_{n}^{\times}\cong \{u=(u_{1},...,u_{n})\in \mathbb{R}^{n}\,
		\vert\,u_{1}\,p_{1}+\ldots + u_{n}\,p_{n}=0\}\,,
	\end{eqnarray}
	where $p\in \mathcal{P}_{n}^{\times}\,,$ and where by definition, $p_{i}:=p(x_{i})$ 
	for all $x_{i}\in E_{n}\,.$\\
	If $u\in \mathbb{R}^{n}$ is a vector satisfying  $u_{1}\,p_{1}+\ldots+u_{n}\,p_{n}=0$ 
	for a given probability $p\,,$ then we shall denote by $[u]_{p}$ the unique tangent vector 
	of $\mathcal{P}_{n}^{\times}$ at the point $p$ determined by the exponential representation.
	One easily sees that if $p(t)$ is a smooth curve in $\mathcal{P}_{n}^{\times}\,,$ 
	then
	\begin{eqnarray}\label{equation derivée plus identification expo}
		\frac{d}{dt}\bigg\vert_{0}p(t)=[u]_{p(0)}\,\,\,\,\,\Leftrightarrow\,\,\,\,\,
		\dfrac{d}{dt}\bigg\vert_{0}\,p_{i}(t)=p_{i}(0)\,u_{i}
		\,\,\,\text{for all}\,\,i=1,\cdots,n\,,
	\end{eqnarray}
	where $p_{i}(t):=\big(p(t)\big)(x_{i})\,.$\\
	Equation \eqref{equation derivée plus identification expo} is actually one way 
	to define the exponential representation.\\ 
	
	The Fisher metric $g_{F}$ is now defined, for $[u]_{p},[v]_{p}
	\in T_{p}\mathcal{P}_{n}^{\times}\,,$
	by\footnote{The Fisher metric is actually defined only up to a multiplicative constant, 
	and thus we are free, for later convenience, to introduce the factor 1/4.} 
	\begin{eqnarray}
		(g_{F})_{p}([u]_{p},[v]_{p}):=\dfrac{1}{4}\sum_{k=1}^{n}\,p_{k}u_{k}v_{k}=\dfrac{1}{4}E_{p}(uv)\,,
		\end{eqnarray}
	where $E_{p}(.)$ denotes the expectation with respect to the probability $p$ and 
	where $uv$ denotes the vector in $\mathbb{R}^{n}$ whose $k$-th component is $u_{k}v_{k}\,.$\\
	In most interesting 
	statistical models, the Fisher metric is naturally and intrinsically defined, and is 
	of central importance in information geometry (see \cite{Amari-Nagaoka}).\\
	
	For a real parameter $\alpha\,,$ we also introduce the so-called $\alpha-$connection 
	$\nabla^{(\alpha)}$ on the space $\mathcal{P}_{n}^{\times}\,.$ One way to define it 
	is to use its associated covariant derivative along curves $D^{(\alpha)}/dt\,:$
	if $\big[V(t)\big]_{p(t)}$ is a vector field along a curve $p(t)$ in 
	$\mathcal{P}_{n}^{\times}$ such that $dp(t)/dt=[u(t)]_{p(t)}\,,$
	then, by definition, 
	\begin{eqnarray}\label{equation definition derivée covariante}
		&&\dfrac{D^{(\alpha)}}{dt}\big[V(t)\big]_{p(t)}:=\nonumber\\
		&&\Big[\dot{V}(t)+(1-\alpha)/2\cdot u(t)V(t)
		-E_{p(t)}\Big(\dot{V}(t)+(1-\alpha)/2\cdot u(t)V(t)\Big)\cdot n\Big]_{p(t)}\,,
	\end{eqnarray}
	where $u$ and $V$ are viewed as maps $\mathbb{R}\rightarrow \mathbb{R}^{n}\,,$ 
	$\dot{V}(t)$ is the usual derivative of $V(t)$ with respect to $t$
	and where $n:=(1,...,1)\in \mathbb{R}^{n}\,.$\\
	It may be shown that $\nabla^{(0)}$ corresponds to the Levi-Civita connection associated 
	to the Fisher metric $g_{F}$ and also, if $X,Y,Z$ are vector fields on 
	$\mathcal{P}_{n}^{\times}\,,$ that 
	\begin{eqnarray}\label{equation dual connections}
		X\,g_{F}(Y,Z)=g_{F}\big(\nabla^{(\alpha)}_{X}Y,Z\big)
		+g_{F}\big(Y,\nabla_{X}^{(-\alpha)}Z\big)\,.
	\end{eqnarray}
	Because of \eqref{equation dual connections}, the triple 
	$\big(g_{F},\nabla^{(\alpha)},\nabla^{(-\alpha)}\big)$ is called a dualistic structure on 
	$\mathcal{P}_{n}^{\times}\,,$ and is also one of the major tools in information geometry 
	(see \cite{Amari-Nagaoka}).\\

	In the sequel, we will not use this dualistic structure, 
	but we will restrict our attention to the 1-connection $\nabla^{(1)}\,,$ also called
	exponential connection. This connection is probably,
	in view of \eqref{equation definition derivée covariante}, the simplest 
	and the most natural connection among the family of $\alpha$-connections since 
	its expression reduces to 
	\begin{eqnarray}\label{equation connection exp}
		\dfrac{D^{(1)}}{dt}\big[V(t)\big]_{p(t)}=
		\big[\,\dot{V}(t)-E_{p(t)}\big(\dot{V}(t)\big)\cdot n\,\big]_{p(t)}\,.
	\end{eqnarray}

	We now want to describe the natural geometry of $T\mathcal{P}_{n}^{\times}\,.$ 
	Recall that if $M$ is a manifold endowed with an affine connection $\nabla\,,$ then the 
	Dombrowski splitting Theorem holds (see \cite{Dombrowski}) :
	\begin{eqnarray}
		T(TM)\cong TM\oplus TM\oplus TM\,,
	\end{eqnarray}
	this splitting being viewed as an isomorphism of vector bundles over $M\,,$ and the 
	isomorphism, say $\Phi_{M}\,,$
	being
	\begin{eqnarray}\label{equation Dombrowski}
		T_{u_{x}}TM\ni A_{u_{x}}\overset{\Phi_{M}}{\longmapsto} 
		\big(u_{x},(\pi^{TM})_{*_{u_{x}}}A_{u_{x}},K A_{u_{x}}\big)\,,
	\end{eqnarray}
	where $\pi^{TM}\,:\,TM\rightarrow M$ is the canonical projection and where 
	$K\,:\,T(TM)\rightarrow TM$ is the canonical connector associated to the connection 
	$\nabla\,.$ \\
	Applied to the couple $\big(\mathcal{P}_{n}^{\times},\nabla^{(1)}\big)\,,$ the 
	Dombrowski splitting Theorem yields 
	\begin{eqnarray}\label{equation dombrowski proba}
		T(T\mathcal{P}_{n}^{\times})\cong T\mathcal{P}_{n}^{\times}\oplus T\mathcal{P}_{n}^{\times}
		\oplus T\mathcal{P}_{n}^{\times}\,.
	\end{eqnarray}
	It turns out that the inverse of the vector bundle isomorphism 
	$\Phi_{\mathcal{P}_{n}^{\times}}\,:\,T(T\mathcal{P}_{n}^{\times})\rightarrow  
	T\mathcal{P}_{n}^{\times}\oplus T\mathcal{P}_{n}^{\times}
	\oplus T\mathcal{P}_{n}^{\times}$ can be expressed very explicitly :
\begin{lemma}\label{lemme formule inverse identification}
	For $[u]_{p},[v]_{p},[w]_{p}
	\in T_{p}\mathcal{P}_{n}^{\times}\,,$ we have : 
	\begin{eqnarray}
		\Phi_{\mathcal{P}_{n}^{\times}}^{-1}\big([u]_{p},[v]_{p},[w]_{p}\big) = 
		\dfrac{d}{dt}\bigg\vert_{0}\,
		\Big[u+tw-E_{p(t)}(u+tw)\cdot n\Big]_{p(t)}\,,
	\end{eqnarray}
	where $p(t)$ is a smooth curve in $\mathcal{P}_{n}^{\times}$ satisfying $p(0)=p$ and 
	$dp(t)/dt\big\vert_{0}=[v]_{p}\,.$
\end{lemma}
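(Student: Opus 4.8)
The plan is to use that $\Phi_{\mathcal{P}_{n}^{\times}}$ is a vector bundle isomorphism, so that it suffices to denote by $A_{u_{p}}$ the right-hand side of the claimed formula and to check that $\Phi_{\mathcal{P}_{n}^{\times}}(A_{u_{p}})=\big([u]_{p},[v]_{p},[w]_{p}\big)$. By the defining formula \eqref{equation Dombrowski} of $\Phi_{\mathcal{P}_{n}^{\times}}$, this reduces to verifying three things: that the base point of $A_{u_{p}}$ is $[u]_{p}$, that $(\pi^{T\mathcal{P}_{n}^{\times}})_{*}A_{u_{p}}=[v]_{p}$, and that $KA_{u_{p}}=[w]_{p}$, where $K$ is the connector of $\nabla^{(1)}$.

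First I would set $V(t):=u+tw-E_{p(t)}(u+tw)\cdot n\in\mathbb{R}^{n}$, so that $A_{u_{p}}=\frac{d}{dt}\big|_{0}[V(t)]_{p(t)}$ is the velocity at $t=0$ of the vector field $t\mapsto[V(t)]_{p(t)}$ along the curve $p(t)$. One checks at once that $V(t)$ satisfies the tangency constraint $\sum_{k}p_{k}(t)V_{k}(t)=0$ at every $t$, so the bracket is well defined. Since $u$ represents a tangent vector we have $E_{p}(u)=0$, whence $V(0)=u$ and the base point is $[V(0)]_{p}=[u]_{p}$, giving the first component. Since the curve $t\mapsto[V(t)]_{p(t)}$ projects under $\pi^{T\mathcal{P}_{n}^{\times}}$ to $p(t)$, applying its differential yields $\dot{p}(0)=[v]_{p}$, the second component.

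The substance of the proof is the third component. Here I would invoke the characteristic property of the canonical connector, namely that $K$ sends the velocity of a vector field along a curve to its covariant derivative, so that $KA_{u_{p}}=\frac{D^{(1)}}{dt}\big|_{0}[V(t)]_{p(t)}$. I then apply the explicit expression \eqref{equation connection exp} for $\nabla^{(1)}$. Differentiating $V$ gives $\dot{V}(t)=w-\dot{c}(t)\,n$ with $c(t)=E_{p(t)}(u+tw)$, and using $\dot{p}_{k}(0)=p_{k}v_{k}$ from \eqref{equation derivée plus identification expo} one finds $\dot{c}(0)=E_{p}(uv)+E_{p}(w)$. Substituting into \eqref{equation connection exp} and using $E_{p}(n)=1$ together with $E_{p}(w)=0$, the scalar multiples of $n$ cancel and one is left with $\big[w-E_{p}(w)\,n\big]_{p}=[w]_{p}$, as required.

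The only delicate point, more bookkeeping than conceptual obstacle, is this last computation, where the expectation $E_{p(t)}(u+tw)$ depends on $t$ both explicitly through $tw$ and implicitly through the moving probability $p(t)$, so that its derivative produces the two terms $E_{p}(uv)$ and $E_{p}(w)$ that must be tracked carefully through the cancellation. I would also remark that no separate verification that $A_{u_{p}}$ is independent of the auxiliary curve $p(t)$ is needed: since $\Phi_{\mathcal{P}_{n}^{\times}}$ is injective and the triple just computed depends only on $[u]_{p},[v]_{p},[w]_{p}$, the velocity $A_{u_{p}}$ is automatically well defined. Finally, it is precisely the vanishing at $\alpha=1$ of the drift term $(1-\alpha)/2\cdot uV$ appearing in \eqref{equation definition derivée covariante} that makes formula \eqref{equation connection exp} simple enough for the cancellation to go through cleanly.
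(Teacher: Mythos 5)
Your proof is correct and follows essentially the same route as the paper's: apply $\Phi_{\mathcal{P}_{n}^{\times}}$ to the candidate velocity, read off the base point and the $\pi^{T\mathcal{P}_{n}^{\times}}$-projection, and identify the third component as the covariant derivative $\frac{D^{(1)}}{dt}\big\vert_{0}[V(t)]_{p(t)}$ via the connector, with the same cancellation using $E_{p}(n)=1$ and $E_{p}(w)=0$. Your added checks (the tangency constraint on $V(t)$, the explicit value $\dot{c}(0)=E_{p}(uv)+E_{p}(w)$, and independence of the auxiliary curve via injectivity of $\Phi_{\mathcal{P}_{n}^{\times}}$) are details the paper leaves implicit, and are all sound.
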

\begin{proof}
	It suffices to show that 
	\begin{eqnarray}
		\Phi_{\mathcal{P}_{n}^{\times}}\bigg(\dfrac{d}{dt}\bigg\vert_{0}\,
		\Big[u+tw-E_{p(t)}(u+tw)\cdot n\Big]_{p(t)}\bigg)=\big([u]_{p},[v]_{p},[w]_{p}\big)\,.
	\end{eqnarray}
	To this end, let us consider the curve $\gamma$ in $T\mathcal{P}_{n}^{\times}$ which is 
	defined by 
	\begin{eqnarray}
		\gamma(t):=\Big[u+tw-E_{p(t)}(u+tw)\cdot n\Big]_{p(t)}\,.
	\end{eqnarray}
	We have	
	\begin{eqnarray}
		&\bullet&\gamma(0)=[u]_{p}\,,\\
		&\bullet&
			(\pi^{T\mathcal{P}_{n}^{\times}})_{*_{[u]_{p}}}
			\dfrac{d}{dt}\bigg\vert_{0}\gamma(t)=
			\frac{d}{dt}\bigg\vert_{0}(\pi^{T\mathcal{P}_{n}^{\times}}\circ \gamma)(t)=
			\frac{d}{dt}\bigg\vert_{0}p(t)=[v]_{p}\,,
	\end{eqnarray}
	and thus, we immediately see that 
	\begin{eqnarray}
		\Phi_{\mathcal{P}_{n}^{\times}}\bigg(\dfrac{d}{dt}\bigg\vert_{0}\gamma(t)\bigg)=
		\big([u]_{p},[v]_{p},\ast\big)\,,
	\end{eqnarray}
	where $``\ast"$ has to be determined. But, according to \eqref{equation connection exp},
	\eqref{equation Dombrowski} and the fact that $K^{(1)}$ is a connector (here $K^{(1)}$ 
	is the connector associated to the connection $\nabla^{(1)}$), 
	\begin{eqnarray}
		\ast &=& K^{(1)}\dfrac{d}{dt}\bigg\vert_{0}\gamma(t)
			= \frac{D^{(1)}}{dt}\bigg\vert_{0}\big(\gamma(t)\big)=
			\Bigg[\dfrac{d}{dt}\bigg\vert_{0}\Big(u+tw-E_{p(t)}(u+tw)\cdot n\Big)\nonumber\\
		&&	-E_{p}\bigg(\dfrac{d}{dt}\bigg\vert_{0}\Big(u+tw-E_{p(t)}(u+tw)\cdot n\Big)
			\bigg)\cdot n\Bigg]_{p}\nonumber\\
		&=&\Bigg[w-\dfrac{d}{dt}\bigg\vert_{0}\,E_{p(t)}(u+tw)\cdot n
			+\bigg(\dfrac{d}{dt}\bigg\vert_{0}\,E_{p(t)}(u+tw)\bigg)
			\cdot E_{p}(n)\cdot n\Bigg]_{p}=[w]_{p}\,.\,\,\,\,\,\,\,\,\,\textbf{}
	\end{eqnarray}
	Hence $\Phi_{\mathcal{P}_{n}^{\times}}\Big(d\gamma(t)/dt\big\vert_{0}\Big)=
	\big([u]_{p},[v]_{p},[w]_{p}\big)\,.$ The lemma follows. 
\end{proof}
	In the sequel, we will identify a triple $\big([u]_{p},[v]_{p},[w]_{p}\big)$ 
	with the corresponding element of $T_{[u]_{p}}(T\mathcal{P}_{n}^{\times})$ 
	via the isomorphism $\Phi_{\mathcal{P}_{n}^{\times}}\,.$	\\
	
	Having the decomposition \eqref{equation dombrowski proba}, it is a simple matter 
	to define on $T\mathcal{P}_{n}^{\times}$ an almost Hermitian structure. 
	Indeed, we define a metric $G\,,$ a $2$-form $\Omega$ and an almost complex 
	structure $J$ by setting
	\begin{eqnarray}\label{equation definition G, omega, etc.}
		G_{[u]_{p}}\Big(\big([u]_{p},[v]_{p},[w]_{p}\big),
		\big([{u}]_{p},[\overline{v}]_{p},
		[\overline{w}]_{p}\big)\Big)&:=&
		(g_{F})_{p}\big([v]_{p},[\overline{v}]_{p}\big)+
		(g_{F})_{p}\big([w]_{p},[\overline{w}]_{p}\big)\,,\nonumber\\
	\Omega_{[u]_{p}}\Big(\big([u]_{p},[v]_{p},[w]_{p}\big),
		\big([{u}]_{p},[\overline{v}]_{p},
		[\overline{w}]_{p}\big)\Big)&:=&(g_{F})_{p}\big([v]_{p},[\overline{w}]_{p}\big)-
		(g_{F})_{p}\big([w]_{p},[\overline{v}]_{p}\big)\,,\nonumber\\
	J_{[u]_{p}}\Big(\big([u]_{p},[v]_{p},[w]_{p}\big)\Big)&:=&
		\big([u]_{p},-[w]_{p},[v]_{p}\big)\,,
	\end{eqnarray}
	where $[u]_{p},[v]_{p},[w]_{p},[\overline{v}]_{p},[\overline{w}]_{p}
	\in T_{p}\mathcal{P}_{n}^{\times}\,.$\\

	Clearly, $J^{2}=-\textup{Id}$  and $G(J\,.\,,J\,.\,)=G(\,.\,,\,.\,)\,,$ which means that 
	$(T\mathcal{P}_{n}^{\times},G,J)$ is an almost Hermitian manifold, and one readily sees that 
	$G,J$ and $\Omega$ are compatible, i.e., that 
	$\Omega=G\big(J\,.\,,\,.\,\big)\,;$ the $2$-form $\Omega$ is thus the fundamental 2-form of 
	the almost Hermitian manifold $(T\mathcal{P}_{n}^{\times},G,J)\,.$ 
	
	The above geometric construction is a particular case of a more general construction which is due to 
	Dombrowski (see \cite{Dombrowski}). 

\section{The statistical nature of $\mathbb{P}(\mathbb{C}^{n})$}
\label{section statistical nature}
	
	Recall that the projective space $\mathbb{P}(\mathbb{C}^{n})$ is simply the quotient 
	$(\mathbb{C}^{n}-\{0\})/{\sim}\,,$ where the equivalence relation $``\sim"$ 
	is defined by 
	\begin{eqnarray}
		(z_{1},...,z_{n})\sim (w_{1},...,w_{n})\,\,\,\,\,\Leftrightarrow\,\,\,\,\,
		\exists \lambda\in \mathbb{C}-\{0\}: (z_{1},...,z_{n})=\lambda (w_{1},...,w_{n})\,.
	\end{eqnarray}
	For $z=(z_{1},...,z_{n})\in \mathbb{C}^{n}-\{0\}\,,$ we shall denote by $[z]=[z_{1},...,z_{n}]$
	the corresponding element of $\mathbb{P}(\mathbb{C}^{n})\,.$ One may 
	identify $[z]$ with the complex line $\mathbb{C}\cdot z\,.$\\

	The manifold structure of $\mathbb{P}(\mathbb{C}^{n})$ may be defined as follows. 
	For a vector $u=(u_{1},...,u_{n})\in \mathbb{C}^{n}$ such that 
	$|u|^{2}=\langle u,u\rangle=\overline{u}_{1}u_{1}+\cdots +\overline{u}_{n}u_{n}=1$ 
	(our convention for the Hermitian product $\langle\,,\,\rangle$ on $\mathbb{C}^{n}$
	is that $\langle\,,\,\rangle$ is linear in the second argument)\,,
	we define a chart $(U_{u},\phi_{u})$ of $\mathbb{P}(\mathbb{C}^{n})$ by letting
	\begin{eqnarray}\label{equation definition carte projective}
		\left \lbrace
			\begin{array}{cc}
				U_{u}:=\big\{[z]\in \mathbb{P}(\mathbb{C}^{n})\,\big\vert\, 
				[u]\cap [z]=\{0\}\big\}\,,\\
				\phi_{u}\,:\,U_{u}\rightarrow [u]^{\perp}\subseteq \mathbb{C}^{n}\,,
				[z]\mapsto\dfrac{1}{\langle u,z\rangle}\cdot z-u\,.
			\end{array}
		\right.
	\end{eqnarray}
	If $u$ varies among all the unit vectors in $\mathbb{C}^{n}\,,$ then the corresponding 
	charts $(U_{u},\phi_{u})$ form an atlas for $\mathbb{P}(\mathbb{C}^{n})\,;$ the projective 
	space is thus a real manifold of dimension $2(n-1)\,,$ and, using the above charts, 
	we have the identification
	\begin{eqnarray}
		T_{[u]}\mathbb{P}(\mathbb{C}^{n})\cong [u]^{\perp}=\{w\in\mathbb{C}^{n}\,\big\vert\,
		\langle u,w\rangle=0\}\,.
	\end{eqnarray}
	
	The Fubini-Study metric $g_{FS}$ and the Fubini-Study symplectic form $\omega_{FS}$ 
	are now defined at the point $[u]$ in $\mathbb{P}(\mathbb{C}^{n})$ via the formulas : 
	\begin{eqnarray}\label{equation definition de fubini-study}
		\Big((\phi_{u}^{-1})^{*}g_{FS}\Big)_{0}(\xi_{1},\xi_{2}):=\textup{Re}\,
		\langle\xi_{1},\xi_{2}\rangle\,\,,\,\,\,\,\,\,
		\Big((\phi_{u}^{-1})^{*}\omega_{FS}\Big)_{0}(\xi_{1},\xi_{2}):=\textup{Im}\,
		\langle\xi_{1},\xi_{2}\rangle\,,
	\end{eqnarray}
	where $\xi_{1},\xi_{2}\in [u]^{\perp}\cong T_{[u]}\mathbb{P}(\mathbb{C}^{n})\,.$\\
	One may show that $g_{FS}$ and $\omega_{FS}$ are globally well defined
	on $\mathbb{P}(\mathbb{C}^{n})\,.$\\

	We now want to relate the K\"{a}hler structure of $\mathbb{P}(\mathbb{C}^{n})$ with 
	the almost splitting Hermitian structure associated to the triple 
	$(\mathcal{P}_{n}^{\times},g_{F},\nabla^{(1)})$ discussed in \S\ref{section The geometry of}. 
	To this end, we set
	\begin{eqnarray}
		\mathbb{P}(\mathbb{C}^{n})^{\times}:=
		\big\{\,[z_{1},...,z_{n}]\in \mathbb{P}(\mathbb{C}^{n})\,\big\vert\,z_{i}\neq 0\,\,\,
		\textup{for all}\,\,\,i=1,...,n\,\big\}
	\end{eqnarray}
	and introduce the following smooth map 
	\begin{eqnarray}\label{equation definition tau}
		\tau\,:\,T\mathcal{P}_{n}^{\times}\rightarrow \mathbb{P}(\mathbb{C}^{n})^{\times}\,,
		\,\,\,[u]_{p}\mapsto \big[\,\sqrt{p_{1}}\,e^{iu_{1}/2},...,
		\sqrt{p_{n}}\,e^{iu_{n}/2}\,\big]\,.
	\end{eqnarray}
	The geometrical nature of the map $\tau$ is given by the following lemma. 
\begin{lemma}
	The map $\tau\,:\,T\mathcal{P}_{n}^{\times}\rightarrow \mathbb{P}(\mathbb{C}^{n})^{\times}$ 
		is a universal 
		covering map whose deck transformation group is a copy of $\mathbb{Z}^{n-1}\,.$
\end{lemma}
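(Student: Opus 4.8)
The plan is to realise $\tau$ as the quotient map of a free, properly discontinuous group action and then to upgrade the resulting covering to the universal one. Concretely, I identify $T\mathcal{P}_{n}^{\times}$ with the set of pairs $(p,u)$, $p\in\mathcal{P}_{n}^{\times}$ and $u\in\mathbb{R}^{n}$ with $E_{p}(u)=\sum_{k}p_{k}u_{k}=0$, the exponential representation making $u$ unique once $p$ is fixed, so that $\tau$ is well defined. Writing $\mathbf{1}:=(1,\dots,1)$ and noting that $z_{k}:=\sqrt{p_{k}}\,e^{iu_{k}/2}$ satisfies $|z|^{2}=\sum_{k}p_{k}=1$ and $z_{k}\neq 0$, the target $[z]$ indeed lies in $\mathbb{P}(\mathbb{C}^{n})^{\times}$, so $\tau$ lands where claimed. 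Surjectivity is immediate: given $[z]\in\mathbb{P}(\mathbb{C}^{n})^{\times}$ I normalise to $|z|=1$, set $p_{k}:=|z_{k}|^{2}$ and $z_{k}=\sqrt{p_{k}}\,e^{i\theta_{k}}$, and take $u_{k}:=2\theta_{k}-2E_{p}(\theta)$, which satisfies the constraint and maps to $[z]$.

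The first substantial step is the fiber computation. If $\tau(p,u)=\tau(p',u')$ then there is $\lambda=e^{i\phi}$ with $\sqrt{p_{k}}\,e^{iu_{k}/2}=\lambda\sqrt{p'_{k}}\,e^{iu'_{k}/2}$ for all $k$; comparing moduli and summing squares forces $|\lambda|=1$ and $p=p'$, while comparing phases gives $u_{k}-u'_{k}=2\phi+4\pi m_{k}$ for some $m\in\mathbb{Z}^{n}$. Imposing $E_{p}(u)=E_{p}(u')=0$ pins down $\phi=-2\pi E_{p}(m)$, whence $u'_{k}=u_{k}-4\pi\big(m_{k}-E_{p}(m)\big)$. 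This motivates defining, for $m\in\mathbb{Z}^{n}$, the diffeomorphism $\Psi_{m}(p,u):=\big(p,\;u-4\pi(m-E_{p}(m)\mathbf{1})\big)$ of $T\mathcal{P}_{n}^{\times}$ (it preserves the constraint since $E_{p}(m-E_{p}(m)\mathbf{1})=0$). Because $m_{k}\in\mathbb{Z}$, the resulting multiplier $e^{-2\pi i(m_{k}-E_{p}(m))}=e^{2\pi i E_{p}(m)}$ is independent of $k$, so $\tau\circ\Psi_{m}=\tau$; conversely the fiber computation shows the fibers of $\tau$ are exactly the orbits of the group $\Gamma:=\{\Psi_{m}\}$. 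Since $m\mapsto\Psi_{m}$ is a homomorphism (linearity of $m\mapsto m-E_{p}(m)\mathbf{1}$) whose kernel is $\mathbb{Z}\mathbf{1}$, one gets $\Gamma\cong\mathbb{Z}^{n}/\mathbb{Z}\mathbf{1}\cong\mathbb{Z}^{n-1}$, and vanishing of $m-E_{p}(m)\mathbf{1}$ at a single $p$ already forces $m\in\mathbb{Z}\mathbf{1}$, so the action is free.

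I then verify the two remaining hypotheses. For proper discontinuity I take a compact $K$ and note that $\Psi_{m}(K)\cap K\neq\varnothing$ forces $4\pi(m-E_{p}(m)\mathbf{1})$ to have norm at most the diameter of the (bounded) $u$-range of $K$; the crucial cancellation $(m-E_{p}(m)\mathbf{1})_{k}-(m-E_{p}(m)\mathbf{1})_{l}=m_{k}-m_{l}$ then bounds all differences $m_{k}-m_{l}$ uniformly in $p$, leaving only finitely many classes in $\mathbb{Z}^{n}/\mathbb{Z}\mathbf{1}$. This uniformity across $\mathcal{P}_{n}^{\times}$, in particular near the boundary of the simplex where one might fear the orbit lattice degenerates, is the step I expect to be the main obstacle, and it is exactly what the cancellation resolves. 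To see that $\tau$ is a local diffeomorphism I differentiate along a curve $t\mapsto(p(t),u(t))$ with $\dot p_{k}(0)=p_{k}a_{k}$ and velocity $\dot u$: one finds $\dot z_{k}=\tfrac{1}{2}z_{k}(a_{k}+i\dot u_{k})$, so $d\tau=0$ (i.e.\ $\dot z\in\mathbb{C}z$) means $a_{k}+i\dot u_{k}$ is independent of $k$, and the constraints $E_{p}(a)=0$ and $E_{p}(\dot u)=0$ then force $a=0$ and $\dot u=0$. As $\dim T\mathcal{P}_{n}^{\times}=2(n-1)=\dim_{\mathbb{R}}\mathbb{P}(\mathbb{C}^{n})^{\times}$, injectivity of $d\tau$ gives a local diffeomorphism.

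Putting these together, $\Gamma\cong\mathbb{Z}^{n-1}$ acts freely and properly discontinuously, so $T\mathcal{P}_{n}^{\times}\to T\mathcal{P}_{n}^{\times}/\Gamma$ is a covering with deck group $\Gamma$, and $\tau$ induces a continuous bijection $T\mathcal{P}_{n}^{\times}/\Gamma\to\mathbb{P}(\mathbb{C}^{n})^{\times}$ which is a local diffeomorphism, hence a diffeomorphism; thus $\tau$ is a covering map with deck group $\mathbb{Z}^{n-1}$. Finally, since $\mathcal{P}_{n}^{\times}$ is diffeomorphic to an open simplex, hence contractible, its tangent bundle is trivial and $T\mathcal{P}_{n}^{\times}\cong\mathbb{R}^{2(n-1)}$ is simply connected; therefore $\tau$ is the universal covering map, and its deck group $\mathbb{Z}^{n-1}$ is $\pi_{1}\big(\mathbb{P}(\mathbb{C}^{n})^{\times}\big)$.
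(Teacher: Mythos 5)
Your proof is correct, but it takes a genuinely different route from the paper. The paper never touches fibers, freeness or proper discontinuity directly: it conjugates $\tau$ by two explicit diffeomorphisms, $j_{1}([u]_{p}):=\big(p,(u_{1}-u_{n},\dots,u_{n-1}-u_{n})\big)$ and $j_{2}\big(\big[\sqrt{p_{1}}e^{iu_{1}/2},\dots,\sqrt{p_{n}}e^{iu_{n}/2}\big]\big):=\big(p,(e^{i(u_{1}-u_{n})/2},\dots,e^{i(u_{n-1}-u_{n})/2})\big)$, into the standard map $\overline{\tau}:\mathcal{P}_{n}^{\times}\times\mathbb{R}^{n-1}\rightarrow\mathcal{P}_{n}^{\times}\times\mathbb{T}^{n-1}$, which is manifestly the quotient by the free, proper translation action of $\mathbb{Z}^{n-1}$ (by $4\pi$ in each coordinate). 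Passing to phase differences $u_{k}-u_{n}$ kills at a stroke both the constraint $E_{p}(u)=0$ and the projective ambiguity, so covering, universality and deck group all drop out of one commutative square; the only real work is checking that $j_{2}$ is well defined, which is essentially your fiber computation in disguise. Your approach instead keeps the constrained coordinates and does everything by hand: you compute the fibers, realize them as orbits of the $p$-dependent action $\Psi_{m}(p,u)=\big(p,u-4\pi(m-E_{p}(m)\mathbf{1})\big)$ of $\Gamma\cong\mathbb{Z}^{n}/\mathbb{Z}\mathbf{1}$, and verify freeness, proper discontinuity (via the cancellation $m_{k}-m_{l}$, which correctly neutralizes the $p$-dependence) and that $d\tau$ is injective. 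What your route buys is that it requires no inspired change of coordinates and yields $\pi_{1}\big(\mathbb{P}(\mathbb{C}^{n})^{\times}\big)\cong\mathbb{Z}^{n-1}$ with the deck action written out intrinsically; what the paper's route buys is a five-line proof in which all the point-set hypotheses are inherited from the model covering $\mathbb{R}^{n-1}\rightarrow\mathbb{T}^{n-1}$.

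One small inaccuracy to repair in your local-diffeomorphism step: in the representation $\{(p,u):E_{p}(u)=0\}$, a velocity along a curve satisfies $E_{p}(\dot{u})=-E_{p}(au)$, not $E_{p}(\dot{u})=0$, since differentiating the constraint produces the cross term $E_{p}(au)$. The argument survives with the order of deductions fixed: from $a_{k}+i\dot{u}_{k}=2c$ and $E_{p}(a)=0$ you first get $a=0$; then the constraint gives $E_{p}(\dot{u})=0$ after all, whence $\dot{u}=0$.
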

\begin{proof}
	Let $\mathbb{T}^{n-1}$ denotes the $(n{-}1)$-dimensional torus, and let us 
	consider the following diagram :
	\begin{eqnarray}
		\xymatrix@!C{
			T\mathcal{P}_{n}^{\times} 
			\ar[r]_{\displaystyle}^
			{\displaystyle\tau}
			\ar[d]_{j_{1}} & 
			\mathbb{P}(\mathbb{C}^{n})^{\times} \ar[d]_{j_{2}} 
			& 
			\\
			\mathcal{P}_{n}^{\times}\times\mathbb{R}^{n-1} 
			\ar[r]_{\displaystyle}^{\displaystyle\overline{\tau}}&
			\mathcal{P}_{n}^{\times}\times \mathbb{T}^{n-1}
				}
	\end{eqnarray}
	where
	\begin{description}
		\item[$\bullet$] $j_{1}([u]_{p}):=\big(p,(u_{1}-u_{n},...,u_{n-1}-u_{n})\big)\,,$
		\item[$\bullet$] $j_{2}\big(\big[\sqrt{p_{1}}e^{iu_{1}/2},...,
			\sqrt{p_{n}}e^{iu_{n}/2}\big]\big):= \big(p,(e^{i(u_{1}-u_{n})/2},...,
			e^{i(u_{n{-}1}-u_{n})/2})\big)\,,$
		\item[$\bullet$] $\overline{\tau}(p,u):=\big(p,(e^{iu_{1}/2},...,e^{iu_{n-1}/2})\big)\,.$
	\end{description}
	Clearly, $j_{1}$ and $j_{2}$ are diffeomorphims, and one easily sees that
	$\overline{\tau}$ is nothing but the quotient map associated to the (free and proper) 
	action of the group $\mathbb{Z}^{n-1}$ on $\mathcal{P}_{n}^{\times}\times
	\mathbb{R}^{n-1}$ given by $(k_{1},...,k_{n-1})\cdot
	(p,u):=\big(p,(u_{1}+4k_{1}\pi,...,u_{n-1}+4k_{n-1}\pi)\big)\,.$ As the diagram is manifestly 
	commutative, the lemma follows. 
\end{proof}
\begin{lemma}\label{lemma derivative de tau}
	For $([u]_{p},[v]_{p},[w]_{p})\in T_{[u]_{p}}T\mathcal{P}_{n}^{\times}\,,$
	we have
	\begin{eqnarray}
		&&(\phi_{z}\circ \tau)_{*_{[u]_{p}}}\big([u]_{p},[v]_{p},[w]_{p}\big)\nonumber\\
		&=&\Big(1/2\,\sqrt{p_{1}}\,e^{iu_{1}/2}(v_{1}+iw_{1}),...,
		1/2\,\sqrt{p_{1}}\,e^{iu_{n}/2}(v_{n}+iw_{n})\Big)\,,
	\end{eqnarray}
	where $z:=\big(\sqrt{p_{1}}\,e^{iu_{1}/2},...,\sqrt{p_{n}}\,e^{iu_{n}/2}\big)\in\mathbb{C}^{n}$
	and where $\phi_{z}\,:\,U_{z}\subseteq \mathbb{P}(\mathbb{C}^{n})
	\rightarrow [z]^{\perp}\subseteq \mathbb{C}^{n}$ is the chart 
	on $\mathbb{P}(\mathbb{C}^{n})$ introduced in \eqref{equation definition carte projective}.
\end{lemma}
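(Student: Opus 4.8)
The plan is to evaluate $\phi_z\circ\tau$ along a conveniently chosen curve whose velocity at $t=0$ is the given tangent vector, and then differentiate. First I would use Lemma~\ref{lemme formule inverse identification} to realize the element $([u]_p,[v]_p,[w]_p)\in T_{[u]_p}T\mathcal{P}_n^{\times}$ as the velocity at $0$ of the curve
\begin{eqnarray}
\gamma(t)=\Big[\,u+tw-E_{p(t)}(u+tw)\cdot n\,\Big]_{p(t)}
\end{eqnarray}
in $T\mathcal{P}_n^{\times}$, where $p(t)$ is any smooth curve in $\mathcal{P}_n^{\times}$ with $p(0)=p$ and $\dot p(0)=[v]_p$. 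Writing $U(t):=u+tw-E_{p(t)}(u+tw)\cdot n$ for the exponential representative along $\gamma$ and applying the definition \eqref{equation definition tau} of $\tau$, I get $\tau(\gamma(t))=[Z(t)]$ with $Z_k(t):=\sqrt{p_k(t)}\,e^{iU_k(t)/2}$, so that the object to differentiate is the explicit $\mathbb{C}^n$-valued map $t\mapsto \phi_z([Z(t)])=\frac{1}{\langle z,Z(t)\rangle}Z(t)-z$ coming from the chart \eqref{equation definition carte projective}.

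Next I would differentiate this chart expression at $t=0$. Since $E_p(u)=0$ one has $U(0)=u$ and hence $Z(0)=z$; moreover $|z|^2=\sum_k p_k=1$, so $z$ is a unit vector and $\langle z,Z(0)\rangle=1$. Differentiating $\frac{1}{\langle z,Z(t)\rangle}Z(t)-z$ and inserting these values collapses everything to
\begin{eqnarray}
\frac{d}{dt}\Big|_0\phi_z([Z(t)])=Z'(0)-\langle z,Z'(0)\rangle\,z,
\end{eqnarray}
i.e. the orthogonal projection of $Z'(0)$ onto $[z]^{\perp}$, which is consistent with the fact that $\phi_z$ takes values in $[z]^{\perp}$.

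It then remains to compute $Z'(0)$ componentwise. Differentiating $Z_k(t)=\sqrt{p_k(t)}\,e^{iU_k(t)/2}$ and using the exponential representation \eqref{equation derivée plus identification expo} in the form $p_k'(0)=p_kv_k$, together with $U_k'(0)=w_k-\frac{d}{dt}\big|_0E_{p(t)}(u+tw)=w_k-E_p(uv)$ (where $E_p(w)=0$ has been used), I obtain
\begin{eqnarray}
Z_k'(0)=\tfrac{1}{2}\sqrt{p_k}\,e^{iu_k/2}(v_k+iw_k)-\tfrac{i}{2}E_p(uv)\,z_k.
\end{eqnarray}
The second summand is proportional to $z$, so it drops out of the projection $Z'(0)-\langle z,Z'(0)\rangle z$; and for the first summand $A:=\tfrac12\big(\sqrt{p_k}\,e^{iu_k/2}(v_k+iw_k)\big)_k$ a short computation gives $\langle z,A\rangle=\tfrac12\big(E_p(v)+iE_p(w)\big)=0$, because $[v]_p$ and $[w]_p$ satisfy the tangency constraint. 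Hence the projection acts trivially and the derivative equals $A$, which is exactly the asserted formula.

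The main obstacle — really the only delicate point — is the bookkeeping of the two correction terms: the subtracted expectation $E_{p(t)}(u+tw)\cdot n$ built into the exponential representative $U(t)$, and the $\langle z,Z'(0)\rangle z$ term that enforces values in $[z]^{\perp}$. The computation works out cleanly precisely because the three constraints $E_p(u)=E_p(v)=E_p(w)=0$ attached to the exponential representation make both correction contributions either cancel against $z$ or vanish outright. I would therefore be careful to invoke these vanishing identities at the right places rather than expanding all terms blindly.
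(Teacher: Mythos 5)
Your proposal is correct and follows essentially the same route as the paper: both realize the tangent vector as the velocity of the curve supplied by Lemma~\ref{lemme formule inverse identification} and differentiate the chart expression $\phi_{z}$ from \eqref{equation definition carte projective} along it. The only difference is bookkeeping: the paper discards the common phase $e^{-iE_{p(t)}(u+tw)/2}$ \emph{before} differentiating, via the homogeneity $[\lambda\cdot z]=[z]$, so the expectation correction never enters the derivative (and the projection term then vanishes since $\langle z(0),\dot{z}(0)\rangle=0$), whereas you carry the correction through and verify that its contribution to $Z'(0)$ is proportional to $z$ and is annihilated by the projection onto $[z]^{\perp}$ --- the same cancellation, checked at the level of derivatives.
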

\begin{proof}
	Let us fix $[u]_{p},[v]_{p},[w]_{p}\in 
	T\mathcal{P}_{n}^{\times}$ and set 
	\begin{eqnarray}
		z(t):=\big(\sqrt{p_{1}(t)}\,e^{i(u_{1}+tw_{1})/2},...,
		\sqrt{p_{n}(t)}\,e^{i(u_{n}+tw_{n})/2}\big)\in \mathbb{C}^{n}-\{0\}\,,
	\end{eqnarray}
	where $p(t)$ 
	is a smooth curve in $\mathcal{P}_{n}^{\times}$ satisfying $dp(t)/dt\big\vert_{0}=[v]_{p}\,.$
	Observe that $[z]=[z(0)]=\tau([u]_{p})\,,$ $\langle z(t),z(t)\rangle^{2}=1$
	and that $\langle z(t),\dot{z}(t)\rangle=0$ since $z(t)$ is normalized.\\
	Using the identification \eqref{equation dombrowski proba}, 
	Lemma \ref{lemme formule inverse identification} 
	as well as the formula $[\lambda\cdot z]=[z]$ ($\lambda\in \mathbb{C}-\{0\}$) which holds 
	on $\mathbb{P}(\mathbb{C}^{n})\,,$ we see that 
	\begin{eqnarray}\label{equation derivée de tau}
		&&(\phi_{z(0)}\circ \tau)_{*_{[u]_{p}}}\big([u]_{p},[v]_{p},[w]_{p}\big)
			=\dfrac{d}{dt}\bigg\vert_{0}(\phi_{z(0)}\circ \tau)
			\Big(\big[u{+}tw{-}E_{p(t)}(u{+}tw){\cdot} n\big]_{p(t)}\Big)\nonumber\\
		&=& \dfrac{d}{dt}\bigg\vert_{0}\,\phi_{z(0)}\Big(\big[\sqrt{p_{1}(t)}\,
			e^{i(u_{1}+tw_{1})/2}\,e^{-iE_{p(t)}(u+tw)/2 },...\nonumber\\
		&&\textbf{}\,\,\,\,\,\,\,\,\,\,\,\,\,\,\,\,\,\,\,\,\,\,\,\,\,\,\,\,\,\,\,\,\,\,\,\,\,\,\,\,\,
			\,\,\,\,\,...,\sqrt{p_{n}(t)}\,
			e^{i(u_{n}+tw_{n})/2}\,e^{-iE_{p(t)}(u+tw)/2 }\big]\Big)\nonumber\\
		&=&\dfrac{d}{dt}\bigg\vert_{0}\,\phi_{z(0)}\Big(\big[\sqrt{p_{1}(t)}\,
			e^{i(u_{1}+tw_{1})/2},...,\sqrt{p_{n}(t)}\,e^{i(u_{n}+tw_{n})/2}\big]\Big)\nonumber\\ 
		&=&\dfrac{d}{dt}\bigg\vert_{0}\,\phi_{z(0)}\big([z(t)]\big)=\dfrac{d}{dt}\bigg\vert_{0}\,
			\bigg(\dfrac{1}{\langle z(0),z(t)\rangle}\cdot z(t)-z(0)\bigg)\nonumber\\
		&=&\dfrac{-\langle z(0),\dot{z}(0) \rangle}{\langle z(0),z(0)\rangle^{2}}\,z(0)
			+\dfrac{1}{\langle z(0),z(0)\rangle}\,\dot{z}(0)=-\langle z(0),\dot{z}(0) \rangle\,z(0)
			+\dot{z}(0)=\dot{z}(0)\,.
	\end{eqnarray}
	The lemma is now a direct consequence of 
	\begin{eqnarray}\label{equation confirmation Japon!!!}
			\big(\dot{z}(0)\big)_{j}
			=\dfrac{d}{dt}\bigg\vert_{0}\,\sqrt{p_{j}(t)}\,e^{i(u_{j}+tw_{j})/2}&=&
			\dfrac{1}{2\,\sqrt{p_{j}}}\,p_{j}v_{j}\,e^{iu_{j}/2}+\sqrt{p_{j}}\,\dfrac{i}{2}w_{j}\,
			e^{iu_{j}/2}\nonumber\\
		&=&1/2\,\sqrt{p_{j}}\,e^{iu_{j}/2}(v_{j}+iw_{j})
	\end{eqnarray}
	(of course, in the above computations we use extensively the exponential representation 
	\eqref{equation derivée plus identification expo})\,.
\end{proof}
	For the next proposition (which is the main observation of this paper), recall that 
	$T\mathcal{P}_{n}^{\times}$ is endowed with its associated splitting Hermitian structure 
	$(G,J,\Omega)$ introduced at the end of \S\ref{section The geometry of}
	and that $\mathbb{P}(\mathbb{C}^{n})$ possesses its natural K\"{a}hler
	structure $(g_{FS},J_{FS},\omega_{FS})\,.$
\begin{proposition}\label{proposition origine des stat}
	The covering map $\tau\,:\,T\mathcal{P}_{n}^{\times}\rightarrow 
	\mathbb{P}(\mathbb{C}^{n})^{\times}$ defined in \eqref{equation definition tau} 
	has the following properties :
	\begin{eqnarray}\label{equation faut optimiser}
		\tau^{*}g_{FS}=G\,,\,\,\,\,\,\,\,\,\,\,
		\tau^{*}\omega_{FS}=\Omega\,,\,\,\,\,\,\,\,\,\,\,
		\tau_{*}\,J=J_{FS}\,\tau_{*}\,.
		\end{eqnarray}
\end{proposition}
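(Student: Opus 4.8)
The plan is to reduce all three identities to a single pointwise computation in the chart $\phi_z$, exploiting the fact that Lemma \ref{lemma derivative de tau} already furnishes a closed form for $\tau_{*}$. Since $\tau$ is a covering map, hence a local diffeomorphism, it suffices to verify the equalities one tangent space at a time. Fix $[u]_{p}$ and set $z:=\big(\sqrt{p_{1}}\,e^{iu_{1}/2},\ldots,\sqrt{p_{n}}\,e^{iu_{n}/2}\big)$; because $\sum_{j}p_{j}=1$ the vector $z$ is of unit norm, so $(U_{z},\phi_{z})$ is one of the charts of \eqref{equation definition carte projective} and $\phi_{z}([z])=\tfrac{1}{\langle z,z\rangle}z-z=0$. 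Consequently the Fubini--Study data at $\tau([u]_{p})=[z]$ are given by the formulas \eqref{equation definition de fubini-study} evaluated at the origin, which is exactly where Lemma \ref{lemma derivative de tau} lands.

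First I would feed the two tangent vectors $\big([u]_{p},[v]_{p},[w]_{p}\big)$ and $\big([u]_{p},[\overline{v}]_{p},[\overline{w}]_{p}\big)$ into Lemma \ref{lemma derivative de tau}, obtaining their images $\xi,\eta\in[z]^{\perp}$ with components $\xi_{j}=\tfrac12\sqrt{p_{j}}\,e^{iu_{j}/2}(v_{j}+iw_{j})$ and $\eta_{j}=\tfrac12\sqrt{p_{j}}\,e^{iu_{j}/2}(\overline{v}_{j}+i\overline{w}_{j})$. The next step is to expand the Hermitian product $\langle\xi,\eta\rangle=\sum_{j}\overline{\xi_{j}}\,\eta_{j}$; since $v_{j},w_{j},\overline{v}_{j},\overline{w}_{j}$ are real the phases $e^{\pm iu_{j}/2}$ cancel and each summand collapses to $\tfrac14 p_{j}(v_{j}-iw_{j})(\overline{v}_{j}+i\overline{w}_{j})$. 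Separating real and imaginary parts then yields $\textup{Re}\,\langle\xi,\eta\rangle=\tfrac14\sum_{j}p_{j}(v_{j}\overline{v}_{j}+w_{j}\overline{w}_{j})$ and $\textup{Im}\,\langle\xi,\eta\rangle=\tfrac14\sum_{j}p_{j}(v_{j}\overline{w}_{j}-w_{j}\overline{v}_{j})$. Comparing these with the definition of the Fisher metric $g_{F}$ and with \eqref{equation definition G, omega, etc.} gives exactly $\tau^{*}g_{FS}=G$ and $\tau^{*}\omega_{FS}=\Omega$.

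For the third identity I would use the same formula together with the observation that on the complex subspace $[z]^{\perp}\subseteq\mathbb{C}^{n}$ the Fubini--Study complex structure $J_{FS}$ is simply multiplication by $i$ (this is the content of the compatibility $\omega_{FS}=g_{FS}(J_{FS}\,\cdot\,,\,\cdot\,)$ under the conventions of \eqref{equation definition de fubini-study}, where $\langle\,,\,\rangle$ is conjugate-linear in its first argument). Applying Lemma \ref{lemma derivative de tau} to $J_{[u]_{p}}\big([u]_{p},[v]_{p},[w]_{p}\big)=\big([u]_{p},-[w]_{p},[v]_{p}\big)$ produces the vector with components $\tfrac12\sqrt{p_{j}}\,e^{iu_{j}/2}(-w_{j}+iv_{j})=i\,\xi_{j}$, whence $\tau_{*}(J\,\cdot\,)=i\,\tau_{*}(\,\cdot\,)=J_{FS}\,\tau_{*}(\,\cdot\,)$, as required.

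The computations above are entirely routine; the only point demanding care --- and the step I would treat as the main obstacle --- is the bookkeeping surrounding the chart. One must check that the representative $z$ chosen for $[z]$ is normalized (so that the chart $\phi_{z}$ and the formulas \eqref{equation definition de fubini-study} are the legitimate ones to use), confirm that $\tau([u]_{p})$ sits precisely at the origin of that chart, and keep the conjugate-linear-in-the-first-argument convention for $\langle\,,\,\rangle$ consistent throughout; a misplaced sign here would swap $G$ with $\Omega$ or reverse $J$. Once the chart is fixed correctly, all three identities drop out of the single expansion of $\langle\xi,\eta\rangle$.
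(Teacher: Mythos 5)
Your proof is correct and, for the first two identities, is exactly the paper's argument: fix $[u]_{p}$, pass to the chart $\phi_{z}$ centered at $\tau([u]_{p})$ (checking that $z$ is normalized and that $\phi_{z}(\tau([u]_{p}))=0$, so the formulas \eqref{equation definition de fubini-study} apply at the origin --- a point the paper uses implicitly), apply Lemma \ref{lemma derivative de tau}, and read off $\tau^{*}g_{FS}=G$ and $\tau^{*}\omega_{FS}=\Omega$ from the real and imaginary parts of the single Hermitian product expansion $\sum_{j}\overline{\xi_{j}}\,\eta_{j}=\frac{1}{4}\sum_{j}p_{j}(v_{j}-iw_{j})(\overline{v}_{j}+i\overline{w}_{j})$. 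The one divergence is the third identity: the paper deduces $\tau_{*}J=J_{FS}\,\tau_{*}$ abstractly from the first two relations together with the compatibility of both triples $(G,J,\Omega)$ and $(g_{FS},J_{FS},\omega_{FS})$ (nondegeneracy of $g_{FS}$ then forces the intertwining), whereas you verify it by a second application of Lemma \ref{lemma derivative de tau}, computing that $J_{[u]_{p}}\big([u]_{p},[v]_{p},[w]_{p}\big)=\big([u]_{p},-[w]_{p},[v]_{p}\big)$ is pushed forward to $i\,\xi$ and identifying $J_{FS}$ with multiplication by $i$ in the chart. Both routes are sound; yours is more concrete but carries the extra obligation of justifying that chart-level identification of $J_{FS}$, which you do correctly --- compatibility $\omega_{FS}=g_{FS}(J_{FS}\,\cdot\,,\,\cdot\,)$ plus nondegeneracy of $g_{FS}$ determines $J_{FS}$ uniquely, and multiplication by $i$ satisfies it under the paper's convention that $\langle\,,\,\rangle$ is conjugate-linear in the first slot --- while the paper's route trades that pointwise check for a short abstract argument.
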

\begin{proof}
	Let us fix 
	$[u]_{p},[v]_{p},[w]_{p},[\overline{v}]_{p},[\overline{w}]_{p}\in 
	T_{p}\mathcal{P}_{n}^{\times}$ and define the normalized vector 
	\begin{eqnarray}
	z:=\big(\sqrt{p_{1}}\,e^{iu_{1}/2},...,\sqrt{p_{n}}\,e^{iu_{n}/2}\big)\in\mathbb{C}^{n}\,.
	\end{eqnarray}
	According to Lemma \ref{lemma derivative de tau}, 
	we have :
	\begin{eqnarray}\label{equation je ne vois presque plus rien...}
		&&\Big\langle(\phi_{z}\circ \tau)_{*_{[u]_{p}}}([u]_{p},[v]_{p},[w]_{p}),
			(\phi_{z}\circ \tau)_{*_{[u]_{p}}}([u]_{p},[\overline{v}]_{p},
			[\overline{w}]_{p})\Big\rangle\nonumber\\
		&=&\Big\langle\Big(1/2\,\sqrt{p_{1}}\,e^{iu_{1}/2}(v_{1}+iw_{1}),...,
			1/2\,\sqrt{p_{1}}\,e^{iu_{n}/2}(v_{n}+iw_{n})\Big)\nonumber\\
		&&	,\Big(1/2\,\sqrt{p_{1}}\,e^{iu_{1}/2}(\overline{v}_{1}+i\overline{w}_{1}),...,
			1/2\,\sqrt{p_{1}}\,e^{iu_{n}/2}(\overline{v}_{n}+i\overline{w}_{n})\Big)\Big\rangle
			\nonumber\\
		&=& \sum_{j=1}^{n}\,\dfrac{1}{4}\,p_{j}\,(v_{j}-iw_{j})(\overline{v}_{j}
			+i\overline{w}_{j})= \sum_{j=1}^{n}\,\dfrac{1}{4}\,p_{j}\,\big(v_{j}\overline{v}_{j}
			+w_{j}\overline{w}_{j}+i(v_{j}\overline{w}_{j}-\overline{v}_{j}w_{j})\big)\nonumber\\
		&=& \dfrac{1}{4}\,\sum_{j=1}^{n}\,p_{j}\,\big(v_{j}\overline{v}_{j}
			+w_{j}\overline{w}_{j}\big)+\dfrac{i}{4}\,\sum_{j=1}^{n}\,p_{j}\,
			(v_{j}\overline{w}_{j}-\overline{v}_{j}w_{j})\nonumber\\
		&=& G_{[u]_{p}}\Big(([u]_{p},[v]_{p},[w]_{p}),([u]_{p},[\overline{v}]_{p},
			[\overline{w}]_{p})\Big)\nonumber\\
		&&\textbf{}\,\,\,\,\,\,\,+i\,\Omega_{[u]_{p}}\Big(([u]_{p},[v]_{p},[w]_{p}),([u]_{p},[\overline{v}]_{p},
			[\overline{w}]_{p})\Big)\,.
	\end{eqnarray}
	Comparing \eqref{equation je ne vois presque plus rien...} 
	with the definition of $g_{FS}$ and $\omega_{FS}$ given in 
		\eqref{equation definition de fubini-study} gives the first two relations in \eqref{equation faut optimiser},
	and these two relations, together with the fact that both triples $\big(G,J,\Omega\big)$
	and $\big(g_{FS},J_{FS},\omega_{FS}\big)$ are compatible, imply the last relation in  \eqref{equation faut optimiser}. The proposition follows. 
\end{proof}
\section*{Acknowledgments} It is a pleasure to thank Hsiung Tze who pointed out to me the existence 
	of the geometrical formulation of quantum mechanics, and who, by his enthusiasm, gave 
	me the necessary motivation to study quantum mechanics outside of its usual presentation.\\
	This work was done with the financial support of the Japan Society for the Promotion 
	of Science. \\

\end{document}